\documentclass[11pt]{article}
\usepackage{graphicx}
\usepackage[margin=1.1in]{geometry}
\usepackage{bm}
\usepackage{subfig}
\usepackage{natbib}
\usepackage{amsmath,amssymb,amsfonts,amsthm,enumitem}

\newcommand{\N}{\ensuremath{\mathbb{N}}}
\newcommand{\R}{\ensuremath{\mathbb{R}}}

\newtheorem{lemma}{Lemma}

\newtheorem{example}{Example}

\newtheorem{condition}{Condition}
\newtheorem{definition}{Definition}

\title{Closure properties of classes of multiple testing procedures}
\author{Georg Hahn
  \\Department of Mathematics, Imperial College London}
\date{}

\begin{document}
\maketitle

\begin{abstract}
Statistical discoveries are often obtained through multiple hypothesis testing.
A variety of procedures exists to evaluate multiple hypotheses,
for instance the ones of Benjamini-Hochberg, Bonferroni, Holm or Sidak.
We are particularly interested in multiple testing procedures with two desired properties:
(solely) monotonic and well-behaved procedures.
This article investigates to which extent the classes of
(monotonic or well-behaved) multiple testing procedures,
in particular the subclasses of so-called step-up and step-down procedures,
are closed under basic set operations,
specifically the union, intersection, difference and the complement of sets of rejected or non-rejected hypotheses.
The present article proves two main results:
First, taking the union or intersection of arbitrary (monotonic or well-behaved) multiple testing procedures
results in new procedures which are monotonic but not well-behaved,
whereas the complement or difference generally preserves neither property.
Second, the two classes of (solely monotonic or well-behaved) step-up and step-down procedures
are closed under taking the union or intersection, but not the complement or difference.
\end{abstract}
\textit{Keywords:}
Multiple Hypothesis Testing, Statistical Significance, Step Up Procedure, Set Operations, Monotonicity

\section{Introduction}
\label{section_introduction}
Multiple testing is a widespread tool to evaluate scientific studies
\citep{WestfallYoung1993,Hsu1996,HochbergTamhane2008}.
We are interested in testing $m \in \N$ hypotheses $H_{01},\ldots,H_{0m}$ with corresponding p-values $p_1,\ldots,p_m$
for statistical significance
while controlling an error criterion such as the familywise error (FWER) or the false discovery rate (FDR).
Following \cite{GandyHahn2016framework}, we define a multiple testing procedure as a mapping
\begin{align*}
h: [0,1]^m \times [0,1] \rightarrow \mathcal{P}(\{ 1,\ldots,m \})
\label{label_h}
\end{align*}
whose input is a vector of $m$ p-values $p \in [0,1]^m$ and a significance level $\alpha \in [0,1]$
and whose output is the set of indices of rejected hypotheses, where $\mathcal{P}$ denotes the power set.

Many procedures of the above form are available in the literature in order to correct for multiple tests,
for instance the procedures of
\cite{Bonferroni1936}, \cite{Sidak1967}, \cite{Holm1979}, \cite{Hochberg1988} or \cite{Benjamini1995CFD}.
Many common procedures, including the ones aforementioned,
belong to a certain class of procedures, called step-up and step-down procedures \citep{RomanoShaikh2006}.
It is assumed throughout the article that only the $m$ p-values which serve as input to $h$
are used as a basis for making decisions,
dependencies between elementary hypotheses are not considered explicitly.
Apart from defining properties on $p$ imposed by those multiple testing procedures to which the results
of this article are applied, no additional conditions on $p$ are required.

This article focuses on two types of multiple testing procedures:
\textit{monotonic} procedures defined in \cite{Roth1999} and \cite{TamhaneLiu2008} as well as
\textit{well-behaved} procedures \citep{GandyHahn2016framework}.
We investigate to which extent the class of solely monotonic
and the class of well-behaved multiple testing procedures is closed
under the computation of the union, intersection, difference or the complement of sets of rejected or non-rejected hypotheses.

A multiple testing procedure is said to be monotonic if smaller p-values
\citep{TamhaneLiu2008} or a higher significance level \citep{Roth1999} lead to more rejections.
\cite{GandyHahn2016framework} call a monotonic multiple testing procedure well-behaved
if p-values corresponding to rejected hypotheses can
be lowered and p-values corresponding to non-rejected hypotheses can
be increased while leaving all rejections and non-rejections invariant.

For a set of given hypotheses,
the closed testing procedure (CTP) of \cite{Marcus1976}
(also referred to as the \textit{closure principle})
and the partitioning principle (PP) of \cite{FinnerStrassburger2002}
provide means to efficiently construct a simultaneous hypothesis test controlling the FWER.
The CTP is based on enforcing \textit{coherence} \citep{Gabriel1969}:
An intersection hypothesis $H_I$,
that is a hypothesis of the form $H_I = \cap_{i \in I} H_i$ for $I \subseteq \{1,\ldots,m\}$,
is rejected if and only if all intersection hypotheses implying $H_I$ are rejected by their local tests~\citep{Hommel2007}.
Many common procedures such as the one of \cite{Holm1979} can be constructed using the CTP.
The PP divides the parameter space underlying the hypotheses of interest into disjoint subsets
which are then tested independently at level $\alpha$.
Since the partitioned hypotheses are disjoint,
no multiplicity correction is necessary and at most one of the mutually exclusive hypotheses is true.
Whereas CTP and PP can only be used to construct procedures with FWER control,
the present article offers a means to combine procedures controlling several criteria such as the FDR
into one procedure (see the example in Section~\ref{subsection_example}).
In case of the CTP, the exponential number of tests to be carried out might also pose a problem:
The present article considers the direct construction of step-up and step-down procedures
which allow for efficient testing of multiple hypotheses.

The motivation for the present article is as follows:
\begin{enumerate}[wide]
  \item Investigating closure properties (in a set theoretical sense) of a class,
  in the case of the present article certain classes of multiple testing procedures,
is of interest in its own right:
The closure of step-up and step-down procedures
allows us to construct new multiple testing procedures of the same (step-up/step-down) form from existing ones;
moreover, the resulting procedure will be given explicitly.
  \item Being able to perform set operations with multiple testing procedures is useful in practice:
Many multiple testing procedures exist to test hypotheses according to various criteria,
each of which might prove beneficial in certain applications.
Whereas hypotheses can also be tested sequentially using several procedures,
it is nontrivial a priori that procedures can be combined to test multiple hypotheses
in a single run while drawing benefits of several criteria simultaneously.
This feature is similar to using (stepwise) ``shortcut procedures'' \citep{RomanoWolf2005,Hommel2007}
which aim to reduce the (potentially) exponential number of tests required by the CTP for FWER control
to a polynomial number of tests.
  \item Monotonic and well-behaved procedures have already been of interest in the literature.
For instance,
\cite{Gordon2007} uses the idea of monotonicity to show that there is no monotonic step-up procedure
which improves upon the \cite{Bonferroni1936} procedure in the sense that it
always returns the same rejections or possibly more.
\cite{GordonSalzman2008} show that the classical \cite{Holm1979} procedure
dominates all monotonic step-down multiple testing procedures in the above sense.
Proving that certain classes of procedures (for instance, monotonic procedures) are closed renders
the applicability of known results more apparent.
  \item The results discussed in this paper extend the methodology developed in
\cite{GandyHahn2014} and \cite{GandyHahn2016framework} which relies on well-behaved procedures.
Briefly, the authors consider a scenario in which the p-value underlying each hypothesis is unknown,
but can be estimated through Monte Carlo samples drawn under the null,
for instance using bootstrap or permutation tests.
Instead of using estimated p-values to obtain ad-hoc decisions on all hypotheses,
the authors prove that it is possible to improve
existing algorithms designed for Monte Carlo based multiple testing
\citep{BesagClifford1991,Lin2005,Wieringen2008,GuoPedadda2008,Sandve2011}:
the proposed modifications guarantee that the test results of published algorithms are identical
(up to an error probability pre-specified by the user)
to the ones obtained with the unknown p-values.
This ensures the repeatability and objectivity of multiple testing results even in the absence of p-values.
\end{enumerate}

The article is structured as follows.
Section~\ref{section_basic} provides formal definitions of the two properties of a multiple testing procedure
under investigation.
Section~\ref{section_arbitrary} considers arbitrary (solely monotonic or well-behaved) multiple testing procedures
and demonstrates that solely the monotonicity is preserved when taking unions and intersections.
The difference and complement are neither monotonic nor well-behaved.
Section~\ref{section_stepupdown} focuses on step-up and step-down procedures
and shows that both classes of (solely monotonic or well-behaved) step-up and step-down
procedures are closed under the union or intersection operation, but not the complement or difference.
The article concludes with a short discussion in Section~\ref{section_discussion}.
All proofs are given in Appendix~\ref{section_proofs}.
In the entire article, $|\cdot|$ and $\Vert \cdot \Vert$ denote the absolute value and the Euclidean norm, respectively,
and $M:=\{1,\ldots,m\}$.

\section{Basic definitions}
\label{section_basic}
Consider a step-up ($h^u$) and step-down ($h^d$) procedure
\begin{align*}
h^u(p,\alpha) &= \left\{ i \in \{ 1,\ldots,m \}: p_i \leq \max \{ p_{(j)}: p_{(j)} \leq \tau_\alpha(j) \} \right\},\\
h^d(p,\alpha) &= \left\{ i \in \{ 1,\ldots,m \}: p_i < \min \{ p_{(j)}: p_{(j)} > \tau_\alpha(j) \} \right\},
\end{align*}
returning the set of indices of rejected hypotheses \citep{GandyHahn2016framework},
where $p_{(1)} \leq p_{(2)} \leq \cdots \leq p_{(m)}$ refers to the ordered p-values.
Any procedure of the above form is fully characterised by a threshold function
$\tau_\alpha: \{ 1,\ldots,m \} \rightarrow [0,1]$ returning the critical
value $\tau_\alpha(i)$ each $p_{(i)}$ is compared to.
A step-up procedure first determines the largest $j \in M$ such that the p-value
$p_{(j)}$ lies below $\tau_\alpha(j)$ and then rejects all hypotheses having p-values up to $p_{(j)}$.
Likewise, a step-down procedure non-rejects all those hypotheses with p-values
larger or equal to the smallest p-value above the threshold function.

We now consider two useful properties of arbitrary multiple testing procedures.
The first one, monotonicity, states that smaller p-values \citep{TamhaneLiu2008}
or a higher significance level \citep{Roth1999} lead to more rejections:

\begin{definition}
\label{definition_monotonicity}
A multiple testing procedure $h$ is \textit{monotonic} if
$h(p,\alpha) \subseteq h(q,\alpha')$
for $p \geq q$ and $\alpha \leq \alpha'$.
\end{definition}
The monotonicity in $\alpha$ introduced by \cite{Roth1999},
also called $\alpha$-consistency \citep{HommelBretz2008},
is a natural property desired for any testing procedure
since testing at a more stringent significance level should never result in more rejections
\citep{DmitrienkoTamhane2013}.

\cite{GandyHahn2016framework} introduce another useful property,
the class of well-behaved multiple testing procedures.
Such procedures, in connection with a generic algorithm presented in \cite{GandyHahn2016framework},
allow to use p-value estimates obtained with independent samples under the null
to compute test results which are proven to be identical
(up to a pre-specified error probability) to the ones obtained with the unknown p-values.
A monotonic multiple testing procedure $h$ is well-behaved if it additionally satisfies the following condition.

\begin{condition}
\label{condition_h}
\begin{enumerate}
  \item Let $p,q \in [0,1]^m$ and $\alpha \in \R$.
If $q_i \leq p_i$ $\forall i \in h(p,\alpha)$ and $q_i \geq p_i$ $\forall i \notin h(p,\alpha)$,
then $h(p,\alpha) = h(q,\alpha)$.
  \item Fix $p^\ast \in [0,1]^m$ and $\alpha^\ast \in [0,1]$.
Then there exists $\delta>0$ such that $p \in [0,1]^m$, $\alpha \in [0,1]$ and
$\max( \| p-p^\ast \|, |\alpha-\alpha^\ast|) < \delta$ imply $h(p,\alpha)=h(p^\ast,\alpha^\ast)$.
\end{enumerate}
\end{condition}

Well-behaved procedures stay invariant if rejected (non-rejected) p-values are replaced by smaller (larger) values.
Moreover, well-behaved procedures are constant on a $\delta$-neighbourhood around fixed inputs $p^\ast$ and $\alpha^\ast$.

The level $\alpha$ is a parameter in Condition~\ref{condition_h} to account for settings
in which $\alpha$ is unknown a-priori:
This can occur, for instance, when the significance level depends on an estimate of the proportion of true null hypotheses
which is often a functional of $p$ \cite[Section 2.2]{GandyHahn2016framework}.
Condition~\ref{condition_h} is a generalisation of \cite[Condition 1]{GandyHahn2014}
which states the same invariance property for the case that $\alpha$ is a given constant:
In this case, $h$ is solely a function of $p$ and
the condition $|\alpha-\alpha^\ast| < \delta$ in the second part of Condition~\ref{condition_h} can be ignored.

\section{Arbitrary multiple testing procedures}
\label{section_arbitrary}
We define the union, intersection, difference and the complement of two procedures
to be the equivalent operations on the sets of rejected hypotheses returned by the two procedures.
Formally, for two multiple testing procedures $h_1$ and $h_2$ we define
\begin{align*}
&h_1 \cup h_2: [0,1]^m \times [0,1] \rightarrow \mathcal{P}(\{1,\ldots,m\}),\\
&h_1 \cup h_2(p,\alpha) := h_1(p,\alpha) \cup h_2(p,\alpha),
\end{align*}
and similarly $h_1 \cap h_2$, $h_1 \setminus h_2$ and the complement
$h_i(p,\alpha)^c := \{1,\ldots,m\} \setminus h_i(p,\alpha)$, where $i \in \{1,2\}$.

In what follows, we sometimes drop the dependence of $h(p,\alpha)$ on $p$, on $\alpha$, or on both parameters.
The following lemma summarises the results.

\begin{lemma}
\label{lemma_results_arbitrary}
Let $h_1$ and $h_2$ be two well-behaved multiple testing procedures.
\begin{enumerate}
  \item $h_1 \cup h_2$ and $h_1 \cap h_2$ are monotonic and satisfy part 2.\ of Condition~\ref{condition_h}.
  \item $h_i(p,\alpha)^c$ and $h_1 \setminus h_2$ are not monotonic, $i \in \{1,2\}$.
\end{enumerate}
\end{lemma}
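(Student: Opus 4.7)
My plan is to split the argument along the two claims and handle Part~1 by direct set-theoretic manipulation from the defining properties of $h_1,h_2$, and to dispatch Part~2 by explicit counterexamples built from simple well-behaved procedures.

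For Part~1, the monotonicity of $h_1\cup h_2$ and $h_1\cap h_2$ is a direct consequence of monotonicity of each $h_i$: if $p\ge q$ and $\alpha\le\alpha'$, then $h_i(p,\alpha)\subseteq h_i(q,\alpha')$ for each $i\in\{1,2\}$, and taking unions or intersections on both sides of the inclusion preserves it. For the local-constancy part of Condition~\ref{condition_h}, I would apply that condition to $h_1$ and to $h_2$ separately at the point $(p^\ast,\alpha^\ast)$ to obtain radii $\delta_1,\delta_2>0$, and then take $\delta:=\min(\delta_1,\delta_2)$. On the $\delta$-neighbourhood of $(p^\ast,\alpha^\ast)$ both $h_1$ and $h_2$ are constant, so $h_1\cup h_2$ and $h_1\cap h_2$ are constant there as well.

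For Part~2, the key observation is that taking complements reverses the inclusion order, so any non-trivially monotonic procedure supplies a counterexample. Concretely, for $m=1$ consider the Bonferroni-type procedure $h(p,\alpha)=\{1\}$ if $p_1\le\alpha$ and $h(p,\alpha)=\emptyset$ otherwise. Evaluating at $(p,\alpha)=(0.5,0.1)$ and $(q,\alpha')=(0.1,0.5)$ gives $p\ge q$ and $\alpha\le\alpha'$, but $h(p,\alpha)^c=\{1\}\not\subseteq\emptyset=h(q,\alpha')^c$, so the complement is not monotonic. For $h_1\setminus h_2$, I would use two Bonferroni-type procedures at different effective levels, say $h_1$ with threshold $\alpha$ and $h_2$ with threshold $\alpha/2$; then $h_1(p,\alpha)\setminus h_2(p,\alpha)=\{1\}$ exactly when $\alpha/2<p_1\le\alpha$. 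Taking $p=0.3$, $q=0.1$ and $\alpha=\alpha'=0.5$ yields $p\ge q$ yet $\{1\}=(h_1\setminus h_2)(p,\alpha)\not\subseteq\emptyset=(h_1\setminus h_2)(q,\alpha')$, violating monotonicity.

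The main obstacle is really a small one: ensuring that the counterexamples are built from procedures that are genuinely well-behaved (not merely monotonic), and that the inputs $(p,\alpha)$ and $(q,\alpha')$ are chosen in the interior of regions of constancy so that well-behavedness actually holds at those points. Both requirements are met by the Bonferroni-type constructions above, provided one avoids the exact threshold boundary $p_1=\alpha$.
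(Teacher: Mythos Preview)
Your proposal is correct. For Part~1 it matches the paper's proof exactly: monotonicity via inclusion preservation under $\cup$ and $\cap$, and local constancy via $\delta=\min(\delta_1,\delta_2)$.

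For Part~2 you take a slightly different route. The paper argues abstractly: for $q\le p$ monotonicity gives $h_i(p,\alpha)\subseteq h_i(q,\alpha)$, hence $h_i(p,\alpha)^c\supseteq h_i(q,\alpha)^c$, so the complement is not monotonic; it then writes $h_1\setminus h_2=h_1\cap h_2^c$ to reduce the difference to the complement case, never writing down specific procedures or inputs. You instead construct explicit Bonferroni-type counterexamples. Your approach has the advantage of actually exhibiting inputs at which the required inclusion fails strictly---something the paper leaves implicit and which, strictly speaking, requires the $h_i$ to be non-constant. The paper's version is shorter and foregrounds the structural reason (order reversal under complementation) without needing to verify that any particular procedure is well-behaved. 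One minor remark on your closing paragraph: well-behavedness is a global property of the procedure $h$, not something that ``holds at those points''; since the paper already asserts that Bonferroni-type procedures are well-behaved, your caveat about avoiding the boundary $p_1=\alpha$ is not needed for the argument to go through.
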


As well-behaved procedures are also monotonic,
the complement or difference of two procedures is also not well-behaved.

Although by Lemma~\ref{lemma_results_arbitrary},
both the union and the intersection are monotonic,
they do not necessarily allow to lower the p-values of rejected hypotheses or
to increase the p-values of non-rejected hypotheses (first part of Condition~\ref{condition_h})
as demonstrated in the following two counterexamples.
\begin{example}
\label{counterexample_intersection}
Let $p^\ast=(0.034,0.06,1)$ and $\alpha^\ast=0.1$.
Let $h_1$ be the \cite{Benjamini1995CFD} step-up procedure,
$h_2$ be the \cite{Sidak1967} step-down procedure
and $h(p,\alpha)=h_1(p,\alpha) \cap h_2(p,\alpha)$.
Then $h_1(p^\ast,\alpha^\ast)=\{ 1,2\}$, $h_2(p^\ast,\alpha^\ast)=\{ 1\}$ and thus $2,3 \notin h(p^\ast,\alpha^\ast)$.
However, increasing $p^\ast$ to $q=(0.034,1,1)$
results in $h_1(q,\alpha^\ast)=\emptyset$ and thus $h(q,\alpha^\ast)=\emptyset \neq h(p^\ast,\alpha^\ast)$.
\end{example}

\begin{example}
\label{counterexample_union}
Let $p^\ast$ and $\alpha^\ast$ be as in Example~\ref{counterexample_intersection}.
Let $h_1$ be a step-up procedure which uses the same threshold function as the (step-down) \cite{Sidak1967} correction,
and likewise $h_2$ be a step-down procedure using the same threshold function as the (step-up) \cite{Benjamini1995CFD} procedure
-- using \citep[Lemma 3]{GandyHahn2016framework},
it is straightforward to show that both procedures are well-behaved.
Let $h(p,\alpha)=h_1(p,\alpha) \cup h_2(p,\alpha)$.
Then $h_1(p^\ast,\alpha^\ast)=\{ 1\}$, $h_2(p^\ast,\alpha^\ast)=\emptyset$ and thus $h(p^\ast,\alpha^\ast)=\{ 1\}$.
However, decreasing $p^\ast$ to $q=(0,0.06,1)$
results in $h_2(q,\alpha^\ast)=\{ 1,2 \}$ and thus $h(q,\alpha^\ast)=\{ 1,2 \} \neq h(p^\ast,\alpha^\ast)$.
\end{example}

Examples~\ref{counterexample_intersection} and~\ref{counterexample_union}
also demonstrate that both the union and the intersection of a well-behaved step-up and a well-behaved step-down procedure
are not necessarily well-behaved any more.


Although neither the class of well-behaved multiple testing procedures of general form
nor the combination of a well-behaved step-up and a well-behaved step-down procedure is closed
under the four set operations aforementioned, the next section
proves that this holds true for the special classes of well-behaved step-up and step-down procedures individually
(when taking unions and intersections).

\section{Step-up and step-down procedures}
\label{section_stepupdown}
\cite{GandyHahn2016framework} show that
any step-up or step-down procedure (characterised by its threshold function $\tau_\alpha$)
which satisfies the following condition is well-behaved:
\begin{condition}
\label{condition_threshold}
\begin{enumerate}
  \item $\tau_\alpha(i)$ is non-decreasing in $i$ for each fixed $\alpha$.
  \item $\tau_\alpha(i)$ is continuous in $\alpha$ and non-decreasing in $\alpha$ for each fixed $i$.
\end{enumerate}
\end{condition}
Furthermore, \cite{GandyHahn2016framework} verify that a large variety of commonly used procedures
satisfies Condition~\ref{condition_threshold} and is hence well-behaved,
among them the procedures of
\cite{Bonferroni1936}, \cite{Sidak1967}, \cite{Holm1979}, \cite{Hochberg1988} or \cite{Benjamini1995CFD}.

Even though \cite[Lemma 3]{GandyHahn2016framework} only prove that Condition~\ref{condition_threshold}
is sufficient for a procedure to be well-behaved, the condition is actually also necessary:

\begin{lemma}
\label{lemma_necessary}
Any well-behaved step-up or step-down procedure satisfies Condition~\ref{condition_threshold}.
\end{lemma}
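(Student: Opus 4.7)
The plan is to argue by contraposition on each of the three properties in Condition~\ref{condition_threshold}, using carefully chosen p-value vectors that probe $\tau_\alpha$ at a single index $i$. I work throughout with a well-behaved step-up procedure $h^u$ with threshold $\tau_\alpha$; the step-down case is symmetric. The key object is the probe
\[
p^{(x)}_k = \begin{cases} 0, & k < i,\\ x, & k = i,\\ c, & k > i,\end{cases}
\]
where $c \in [0,1]$ is chosen strictly greater than both $x$ and $\max_{k>i}\tau_\alpha(k)$. When such $c$ exists, a direct trace through the step-up cutoff yields the clean equivalence $i \in h^u(p^{(x)},\alpha) \iff x \le \tau_\alpha(i)$, and in that case $h^u(p^{(x)},\alpha) = \{1,\ldots,i\}$. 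If no such $c$ exists, then $\tau_\alpha(k)=1$ for some $k>i$, which forces $h^u(\cdot,\alpha)$ to reject every hypothesis identically and Condition~\ref{condition_threshold} is trivially verified on that slice.

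From this setup, part 2 of Condition~\ref{condition_threshold} follows immediately. If $\tau_{\alpha_1}(i) > \tau_{\alpha_2}(i)$ for some $\alpha_1 < \alpha_2$, pick $x$ strictly between the two values; the probe witnesses $i \in h^u(p^{(x)},\alpha_1)$ but $i \notin h^u(p^{(x)},\alpha_2)$, contradicting the $\alpha$-monotonicity of $h^u$ in Definition~\ref{definition_monotonicity}. Continuity in $\alpha$ then follows from local constancy: a non-decreasing function can only fail to be continuous at a jump point, and if $\tau_\alpha(i)$ had a jump at $\alpha^*$, choosing $x$ inside the jump interval and applying the probe would flip the rejection status of index $i$ on every neighbourhood of $(p^{(x)},\alpha^*)$, contradicting part 2 of Condition~\ref{condition_h}.

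For part 1 of Condition~\ref{condition_threshold}, suppose for contradiction that $\tau_\alpha(i) > \tau_\alpha(i+1)$ for some $\alpha$ and $i<m$. Pick $a \in (\tau_\alpha(i+1),\tau_\alpha(i))$ and $\epsilon>0$ small enough that $a+\epsilon<\tau_\alpha(i)$, and put
\[
q = (\underbrace{0,\ldots,0}_{i-1}, a, a+\epsilon, c, \ldots, c), \qquad p = (\underbrace{0,\ldots,0}_{i-1}, a+\epsilon, a+\epsilon, c, \ldots, c),
\]
with $c$ as above (so in particular $c>\tau_\alpha(k)$ for all $k>i+1$). Then $p \ge q$ componentwise, and computing the step-up cutoffs directly gives $h^u(q,\alpha) = \{1,\ldots,i\}$ (the check fails at sorted position $i+1$ since $a+\epsilon>\tau_\alpha(i+1)$, so the cutoff is $a$) while $h^u(p,\alpha) = \{1,\ldots,i+1\}$ (the cutoff at sorted position $i$ is now $a+\epsilon$, which also equals $p_{i+1}$). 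This contradicts the inclusion $h^u(p,\alpha)\subseteq h^u(q,\alpha)$ required by Definition~\ref{definition_monotonicity}.

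The step-down case is handled by the mirror probe (padding with values above all relevant $\tau_\alpha(k)$ rather than below), for which the same three contradictions go through with only cosmetic changes. The main obstacle is not the logical shape of the arguments but the bookkeeping of step-up (or step-down) cutoffs when other thresholds at positions $k>i$ could interact with the padding; isolating all three proofs through the single uniform padding value $c$ is what makes the edge cases manageable and lets the saturated case $\tau_\alpha(k)=1$ be dispatched once at the outset.
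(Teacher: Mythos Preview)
Your overall strategy—probing $\tau_\alpha$ at a single index via padded p-value vectors and arguing by contraposition—matches the paper's, and in places is more carefully executed: for parts 1 and 2 of Condition~\ref{condition_threshold} the paper informally reduces to ad hoc examples with $m=2$ and $m=1$, whereas you work in the actual ambient dimension $m$ and give a genuine monotonicity violation for part~1. For continuity both proofs invoke part~2 of Condition~\ref{condition_h} on essentially the same probe (the paper pads with ones, you with a value $c$ strictly above the tail thresholds).

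There is, however, a genuine gap in your disposal of the saturated case. The claim that $\tau_\alpha(k)=1$ for some $k>i$ forces $h^u(\cdot,\alpha)$ to reject every hypothesis identically is false: with $m=3$, $\tau_\alpha=(0.5,1,0)$ and $p=(0.3,0.7,0.9)$ the step-up cutoff is $p_{(2)}=0.7$ and only $\{1,2\}$ is rejected. Worse, the edge case is not merely cosmetic. Take $m=2$, $\tau_\alpha(1)=1-\alpha$, $\tau_\alpha(2)=1$: then $h^u(p,\alpha)=\{1,2\}$ for every input, so $h^u$ is trivially well-behaved, yet $\tau_\alpha(1)$ is strictly decreasing in $\alpha$ and Condition~\ref{condition_threshold} fails. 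This shows the lemma cannot hold for an \emph{arbitrary} threshold representation of a well-behaved procedure; some canonicality assumption on $\tau_\alpha$ is implicitly needed. The paper's own proof (which pads with $1$ and never addresses what happens when some tail threshold equals $1$) has the same blind spot, so you are in good company—but your sentence ``Condition~\ref{condition_threshold} is trivially verified on that slice'' is not a valid shortcut and would need to be replaced either by a canonical choice of threshold or by a restriction to indices at which the probe genuinely isolates $\tau_\alpha(i)$.
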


Consider two step-up procedures $h^u$ and $\tilde{h}^u$
with threshold functions $\tau_\alpha^u$ and $\tilde{\tau}_\alpha^u$
as well as two step-down procedures $h^d$ and $\tilde{h}^d$
with threshold functions $\tau_\alpha^d$ and $\tilde{\tau}_\alpha^d$.

In the following subsections we separately investigate whether the classes of step-up (step-down) procedures
are closed under each of the four set operations (union, intersection, difference and complement).
Moreover, we investigate whether the subclasses of well-behaved step-up (step-down) procedures are closed.
To this end, by Lemma~\ref{lemma_necessary},
it suffices to show that the classes of step-up (step-down) procedures satisfying Condition~\ref{condition_threshold} are closed.

\subsection{Union}
\label{subsection_stepupdown_union}
\begin{figure}[!t]
  \includegraphics[width=0.49\textwidth]{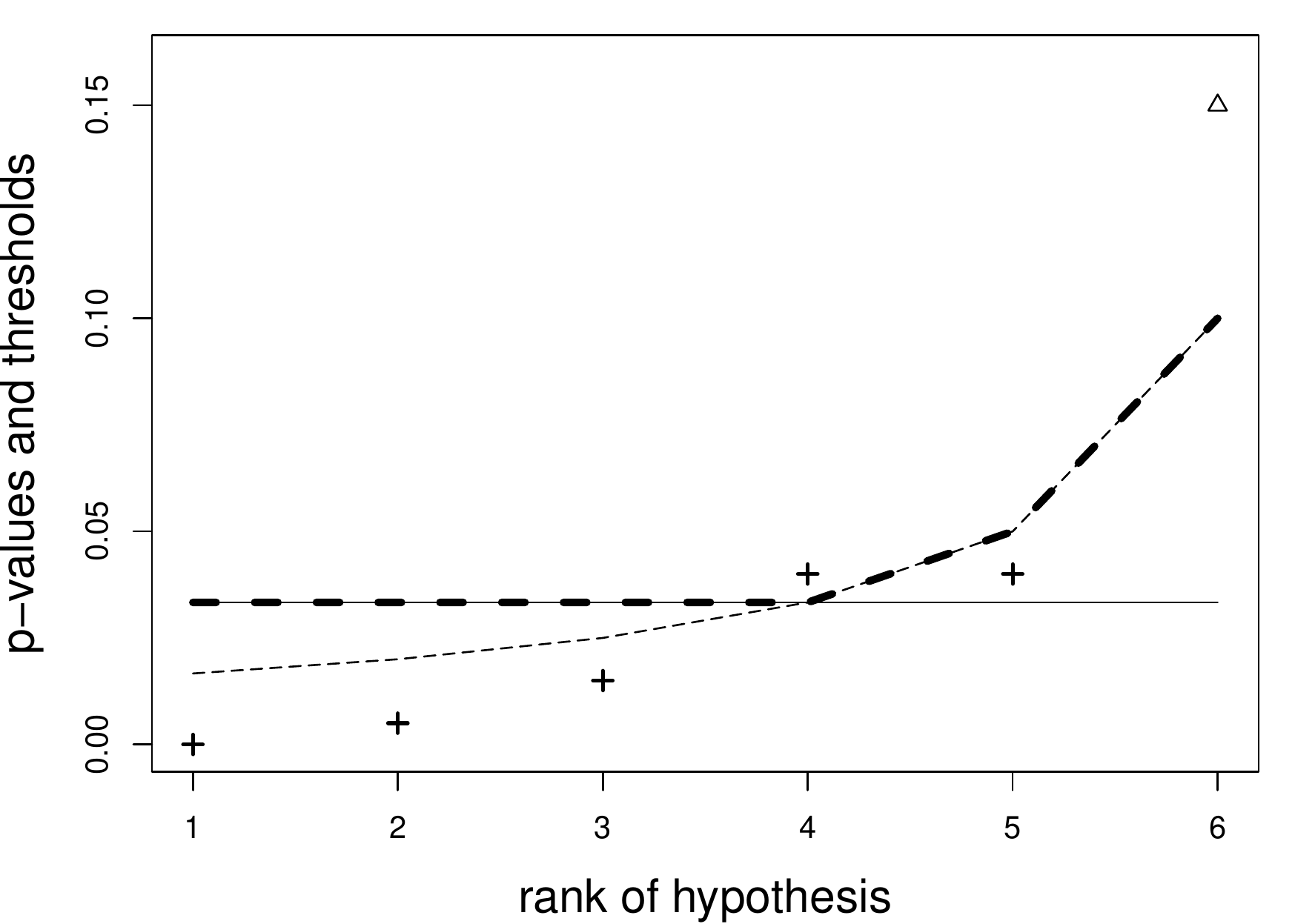}
  \includegraphics[width=0.49\textwidth]{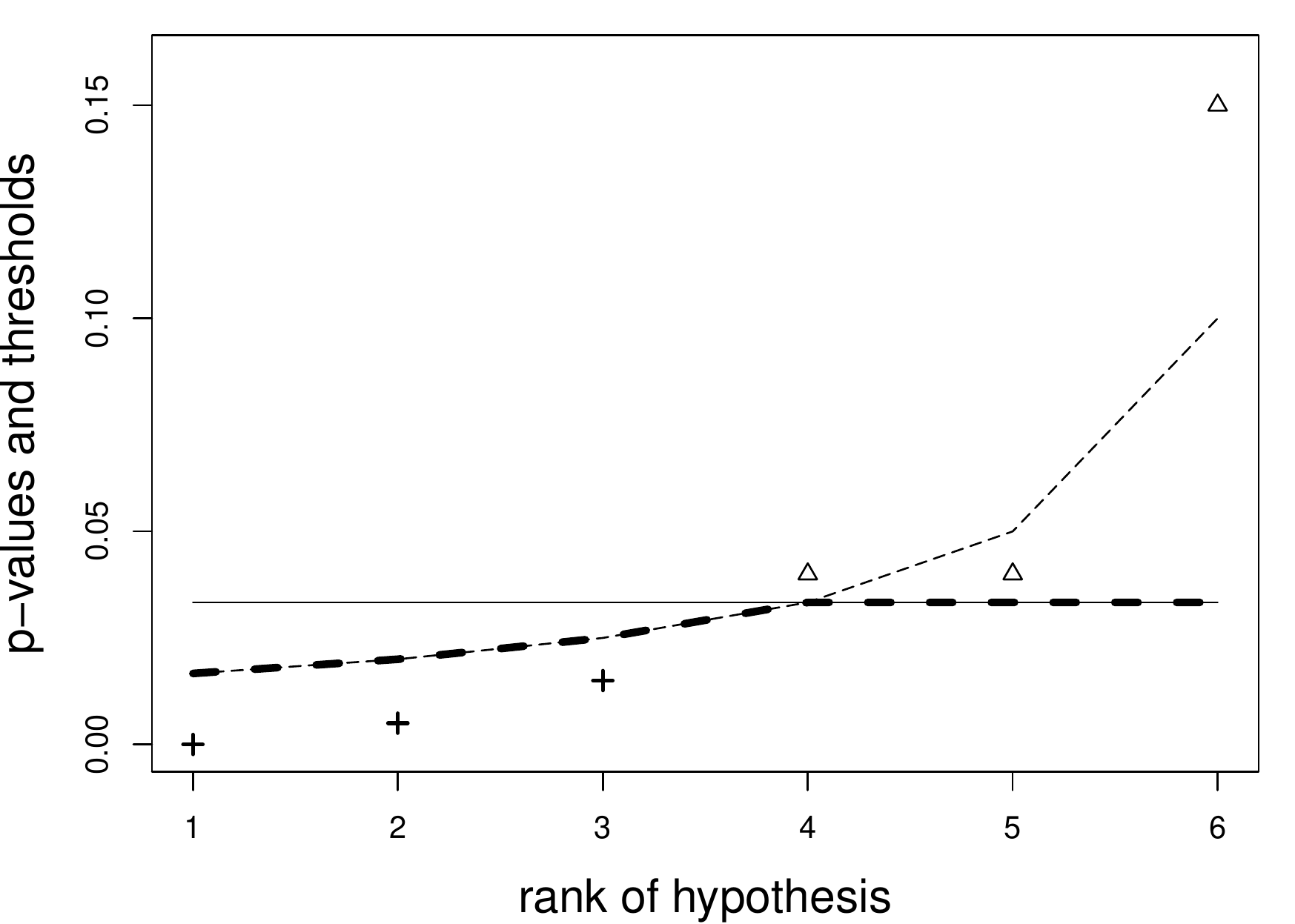}
  \caption{Combined threshold function (bold) for the computation of the
  union (left) and the intersection (right) of the \cite{Bonferroni1936} correction (vertical solid line)
  and the \cite{Hochberg1988} (dashed line) procedure.
  The \cite{Bonferroni1936} correction was applied with significance level $0.2$,
  the \cite{Hochberg1988} procedure with level $0.1$.
  P-values of rejected (crosses) and non-rejected (triangles) hypotheses.}
  \label{figure_visualisation}
\end{figure}
The class of step-up procedures is closed under the union operation:
To be precise, if $h^u$ and $\tilde{h}^u$ are two step-up procedures,
their union is computed by another step-up procedure $h$ with threshold function
$\tau_\alpha(i)=\max(\tau_\alpha^u(i),\tilde{\tau}_\alpha^u(i))$
as visualised in Fig.~\ref{figure_visualisation} (left).

This is seen as follows:
As $\tau_\alpha^u(i),\tilde{\tau}_\alpha^u(i) \leq \tau_\alpha(i)$ for all $i \in M$, all hypotheses
rejected by either $h^u$ or $\tilde{h}^u$ are also rejected by $h$,
that is $h^u \cup \tilde{h}^u \subseteq h$.
Likewise, as $\tau_\alpha(i)$ takes precisely one of the values $\tau_\alpha^u(i)$ or $\tilde{\tau}_\alpha^u(i)$ for each $i \in M$,
any p-value belonging to the non-rejection area of both procedures $h^u$ and $\tilde{h}^u$ also stays non-rejected in $h$,
hence $(h^u)^c \cap (\tilde{h}^u)^c \subseteq h^c$.

Moreover, the subclass of well-behaved step-up procedures is also closed under the union operation as proven in the following lemma.

\begin{lemma}
\label{lemma_stepup_union}
If $h^u$ and $\tilde{h}^u$ are two step-up procedures which satisfy Condition~\ref{condition_threshold}
then so does the union $h^u \cup \tilde{h}^u$.
\end{lemma}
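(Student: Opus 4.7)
The approach is to leverage the explicit construction described immediately before the lemma: the union $h^u \cup \tilde{h}^u$ coincides with the step-up procedure $h$ whose threshold function is
\[
\tau_\alpha(i) \;=\; \max\bigl(\tau_\alpha^u(i),\,\tilde\tau_\alpha^u(i)\bigr).
\]
The argument in the preceding paragraphs already shows that this $h$ equals $h^u \cup \tilde{h}^u$ as a set-valued map (both inclusions $h^u \cup \tilde{h}^u \subseteq h$ and $h^c \subseteq (h^u)^c \cap (\tilde{h}^u)^c$ are verified there). So the task reduces to verifying that $\tau_\alpha$ itself inherits Condition~\ref{condition_threshold} from $\tau_\alpha^u$ and $\tilde\tau_\alpha^u$.

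For part~1 of the condition, I would fix $\alpha$ and take $i \le j$. By assumption $\tau_\alpha^u(i) \le \tau_\alpha^u(j) \le \tau_\alpha(j)$ and $\tilde\tau_\alpha^u(i) \le \tilde\tau_\alpha^u(j) \le \tau_\alpha(j)$, so taking the maximum of the left-hand sides gives $\tau_\alpha(i) \le \tau_\alpha(j)$. For part~2, I would fix $i$. Monotonicity in $\alpha$ follows by the same pointwise comparison: for $\alpha \le \alpha'$, both $\tau_\alpha^u(i) \le \tau_{\alpha'}^u(i) \le \tau_{\alpha'}(i)$ and $\tilde\tau_\alpha^u(i) \le \tilde\tau_{\alpha'}^u(i) \le \tau_{\alpha'}(i)$, whence $\tau_\alpha(i) \le \tau_{\alpha'}(i)$. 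Continuity in $\alpha$ follows from the general fact that the pointwise maximum of two continuous real-valued functions is continuous, which can be made explicit via $\max(a,b) = \tfrac{1}{2}(a+b+|a-b|)$ or directly from the $\varepsilon$-$\delta$ definition applied to both $\tau_\alpha^u(i)$ and $\tilde\tau_\alpha^u(i)$.

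There is no real obstacle: the argument is essentially a routine stability-under-maximum check. The only point that deserves a line of justification is the identification $h^u \cup \tilde{h}^u = h$ with the claimed threshold, and since this is carried out in the paragraph preceding the lemma I would simply invoke it rather than redo it inside the proof.
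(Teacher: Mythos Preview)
Your proposal is correct and follows essentially the same approach as the paper: identify the union with the step-up procedure having threshold $\tau_\alpha(i)=\max(\tau_\alpha^u(i),\tilde\tau_\alpha^u(i))$ and then verify Condition~\ref{condition_threshold} for this $\tau_\alpha$. The only cosmetic difference is that the paper checks monotonicity by a w.l.o.g.\ case split on which of the two thresholds realises the maximum, whereas you bound both components by the maximum at the larger argument and take the max---both arguments are equally valid and equally short.
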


Similarly, the union of two step-down procedures $h^d$ and $\tilde{h}^d$
(having threshold functions $\tau_\alpha^d$ and $\tilde{\tau}_\alpha^d$)
is obtained through another step-down procedure
characterised by the threshold function $\tau_\alpha(i)=\max(\tau_\alpha^d(i),\tilde{\tau}_\alpha^d(i))$.
Since the proof of Lemma~\ref{lemma_stepup_union} does not use any properties of $\tau_\alpha^u$ and $\tilde{\tau}_\alpha^u$
other than that both satisfy Condition~\ref{condition_threshold},
the maximum of two step-down threshold functions likewise leads to a threshold function satisfying Condition~\ref{condition_threshold}.

\subsection{Intersection}
\label{subsection_stepupdown_intersection}
Similarly to Section~\ref{subsection_stepupdown_union},
the intersection of two step-up procedures $h^u$ and $\tilde{h}^u$ is again a step-up procedure $h$,
characterised by the new threshold function
$\tau_\alpha(i)=\min(\tau_\alpha^u(i),\tilde{\tau}_\alpha^u(i))$
as visualised in Fig.~\ref{figure_visualisation} (right).

This is seen as follows:
As $\tau_\alpha^u(i),\tilde{\tau}_\alpha^u(i) \geq \tau_\alpha(i)$ for all $i \in M$,
any hypothesis non-rejected by either procedure $h^u$ or $\tilde{h}^u$ is also non-rejected by $h$,
that is $(h^u)^c \cup (\tilde{h}^u)^c \subseteq h^c$.
Likewise, as $\tau_\alpha(i)$ takes precisely one of the values $\tau_\alpha^u(i)$ or $\tilde{\tau}_\alpha^u(i)$ for each $i \in M$,
any p-value in the rejection area of both procedures
remains rejected when tested with $h$,
thus $h^u \cap \tilde{h}^u \subseteq h$.

Similarly to Lemma~\ref{lemma_stepup_union}, the subclass of well-behaved step-up procedures
is again closed under the intersection operation.
\begin{lemma}
\label{lemma_stepup_intersection}
If $h^u$ and $\tilde{h}^u$ are two step-up procedures which satisfy Condition~\ref{condition_threshold}
then so does the intersection $h^u \cap \tilde{h}^u$.
\end{lemma}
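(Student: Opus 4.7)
The plan is to reuse the explicit threshold characterisation $\tau_\alpha(i) = \min(\tau_\alpha^u(i), \tilde{\tau}_\alpha^u(i))$ already established in the paragraph preceding the lemma, and verify that this $\tau_\alpha$ inherits both parts of Condition~\ref{condition_threshold} from $\tau_\alpha^u$ and $\tilde{\tau}_\alpha^u$. Everything boils down to showing that $\min$ preserves (a) monotonicity and (b) continuity, applied twice (once in the index argument $i$, once in the level argument $\alpha$).

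First I would handle part~1 of Condition~\ref{condition_threshold}. Fix $\alpha$ and take $i \leq i'$ in $M$. By hypothesis $\tau_\alpha^u(i) \leq \tau_\alpha^u(i')$ and $\tilde{\tau}_\alpha^u(i) \leq \tilde{\tau}_\alpha^u(i')$. Using the two elementary bounds $\min(\tau_\alpha^u(i),\tilde{\tau}_\alpha^u(i)) \leq \tau_\alpha^u(i) \leq \tau_\alpha^u(i')$ and $\min(\tau_\alpha^u(i),\tilde{\tau}_\alpha^u(i)) \leq \tilde{\tau}_\alpha^u(i) \leq \tilde{\tau}_\alpha^u(i')$ simultaneously gives $\tau_\alpha(i) \leq \min(\tau_\alpha^u(i'),\tilde{\tau}_\alpha^u(i')) = \tau_\alpha(i')$, which is part~1.

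For part~2, fix $i$ and run the same two-step comparison with $\alpha \leq \alpha'$ in place of $i \leq i'$: by assumption $\tau_\alpha^u(i) \leq \tau_{\alpha'}^u(i)$ and $\tilde{\tau}_\alpha^u(i) \leq \tilde{\tau}_{\alpha'}^u(i)$, whence $\tau_\alpha(i) \leq \tau_{\alpha'}(i)$ as before. For continuity in $\alpha$ I would avoid an $\varepsilon$--$\delta$ argument by using the identity $\min(a,b) = \tfrac{1}{2}(a+b-|a-b|)$, so that
\begin{equation*}
\tau_\alpha(i) = \tfrac{1}{2}\bigl(\tau_\alpha^u(i) + \tilde{\tau}_\alpha^u(i) - |\tau_\alpha^u(i) - \tilde{\tau}_\alpha^u(i)|\bigr),
\end{equation*}
which is a composition of continuous operations ($+$, $-$, $|\cdot|$) applied to the continuous (in $\alpha$) functions $\tau_\alpha^u(i)$ and $\tilde{\tau}_\alpha^u(i)$, and therefore continuous in $\alpha$.

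The proof is essentially mechanical, and there is no real obstacle: the only mild subtlety is that establishing monotonicity of $\min(f,g)$ from monotonicity of $f$ and $g$ needs the two separate bounds above rather than a single direct inequality, but this is trivially available since $\min(f,g) \leq f$ and $\min(f,g) \leq g$ hold pointwise. No properties of $\tau_\alpha^u, \tilde{\tau}_\alpha^u$ beyond those supplied by Condition~\ref{condition_threshold} are used, so (as noted in the parallel discussion of the union) the same argument will automatically cover the analogous step-down statement.
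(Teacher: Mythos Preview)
Your proposal is correct and follows essentially the same approach as the paper: both verify Condition~\ref{condition_threshold} directly for the threshold function $\tau_\alpha(i)=\min(\tau_\alpha^u(i),\tilde{\tau}_\alpha^u(i))$. The only cosmetic difference is that the paper argues monotonicity by a case split (assuming without loss of generality that $\tau_\alpha(i)=\tau_\alpha^u(i)$ and then distinguishing which of $\tau_\alpha^u(i+1)$, $\tilde{\tau}_\alpha^u(i+1)$ is smaller), whereas you use the slightly slicker ``$\min\le$ both $\le$ both at $i'$'' bound; and the paper simply cites that the minimum of continuous functions is continuous, while you spell this out via the identity $\min(a,b)=\tfrac12(a+b-|a-b|)$.
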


The intersection of two step-down procedures $h^d$ and $\tilde{h}^d$
is again obtained with another step-down procedure using the threshold function
$\tau_\alpha(i)=\min(\tau_\alpha^d(i),\tilde{\tau}_\alpha^d(i))$.
Analogously to Section~\ref{subsection_stepupdown_union},
the proof of Lemma~\ref{lemma_stepup_intersection} does not use any properties of $\tau_\alpha^u$ and $\tilde{\tau}_\alpha^u$
other than that both satisfy Condition~\ref{condition_threshold},
thus the minimum of two step-down threshold functions again leads to a threshold function satisfying Condition~\ref{condition_threshold}.

\subsection{Complement}
\label{subsection_stepupdown_complement}
Whereas the complement is generally neither well-behaved nor monotonic,
it can be computed for step-up and step-down procedures using the following construction.

Let $\alpha$ be a known constant.
We re-consider the step-up procedure $h^u$ with threshold function $\tau_\alpha^u$.
Then the step-down procedure $h^d(1-p)$ with threshold function
$\tau_\alpha^d(i)=1-\tau_\alpha^u(m+1-i)$
applied to $1-p$ (instead of $p$) computes the complement of $h^u(p)$,
where $1-p$ for $p \in [0,1]^m$ is understood coordinate-wise.

The reasoning behind this is as follows:
For any hypothesis with p-value $p_{(i)}$ below $\tau_\alpha^u(i)$,
$1-p_{(i)}$ (having rank $m+1-i$ in the sorted sequence of values $1-p$)
is above $\tau_\alpha^d(m+1-i)$ by construction of $\tau_\alpha^d$.
Therefore, all former rejections of $h^u$ turn into non-rejections of $h^d$ and vice versa.

Likewise, the complement of a step-down procedure $h^d$ with threshold function $\tau_\alpha^d$
and constant $\alpha$ is computed by a step-up procedure $h^u$ with threshold function
$\tau_\alpha^u(i)=1-\tau_\alpha^d(m+1-i)$.
Condition~\ref{condition_threshold} is again satisfied:

\begin{lemma}
\label{lemma_stepup_complement}
Let $\alpha$ be a known constant.
If the step-up procedure $h^u$ with threshold function $\tau_\alpha^u$ satisfies Condition~\ref{condition_threshold},
then so does its step-down complement $h^d$
(defined with threshold function $\tau_\alpha^d(i)=1-\tau_\alpha^u(m+1-i)$).
\end{lemma}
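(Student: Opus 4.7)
The plan is to directly verify both parts of Condition~\ref{condition_threshold} for $\tau_\alpha^d(i) = 1 - \tau_\alpha^u(m+1-i)$, using only the fact that $\tau_\alpha^u$ satisfies the same condition. Since the step-up procedure $h^u$ is assumed well-behaved, by hypothesis its threshold function is non-decreasing in $i$ and (continuous plus non-decreasing) in $\alpha$. I will exploit the order-reversing map $i \mapsto m+1-i$ together with the affine map $x \mapsto 1-x$, keeping careful track of which monotonicities get reversed.

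For part~1 of Condition~\ref{condition_threshold}, fix $\alpha$. If $i \leq j$ in $M$, then $m+1-i \geq m+1-j$, so monotonicity of $\tau_\alpha^u$ in its argument yields $\tau_\alpha^u(m+1-i) \geq \tau_\alpha^u(m+1-j)$. Applying $x \mapsto 1-x$ flips the inequality and gives $\tau_\alpha^d(i) \leq \tau_\alpha^d(j)$, i.e.\ $\tau_\alpha^d$ is non-decreasing in $i$, as required.

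For part~2 of Condition~\ref{condition_threshold}, fix $i$. Here the statement takes $\alpha$ to be a known constant, so the $\alpha$-monotonicity half of the condition is to be read at a single point and is vacuous; likewise continuity in $\alpha$ at that single point is trivial. This is the essential reason the lemma restricts to constant $\alpha$: were $\alpha$ a variable, the non-decreasing nature of $\tau_\alpha^u$ in $\alpha$ would translate, via the reflection $x \mapsto 1-x$, into $\tau_\alpha^d$ being non-increasing in $\alpha$, which would violate the monotonicity requirement. Flagging this asymmetry clearly is the only subtle point in the argument, and it is precisely what motivates the constant-$\alpha$ hypothesis.

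The analogous step-down-to-step-up direction is entirely symmetric and follows by the same two-line argument, so no separate proof is required. Thus the only obstacle is purely expository: making plain why the $\alpha$-monotonicity condition causes no trouble here, while the main inequality chain itself is a one-line consequence of reversing the argument and reflecting the value.
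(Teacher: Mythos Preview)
Your proposal is correct and follows essentially the same approach as the paper: verify part~1 of Condition~\ref{condition_threshold} by the order-reversing substitution $i\mapsto m+1-i$ combined with the reflection $x\mapsto 1-x$, and dismiss part~2 as vacuous because $\alpha$ is held fixed. The paper's proof is terser (it calls part~1 ``immediate'' and simply cites the remark at the end of Section~\ref{section_basic} for part~2), but the content is identical, including your observation that variable $\alpha$ would flip the monotonicity in $\alpha$ and hence necessitates the constant-$\alpha$ hypothesis.
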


The requirement that $\alpha$ be a known constant is crucial since
$\tau_\alpha^d$ is not non-decreasing in $\alpha$ for a fixed $i$
as required in the second part of Condition~\ref{condition_threshold}.
However, Lemma~\ref{lemma_stepup_complement} is made possible by the fact that for a given constant $\alpha$
(that is, if $h$ and the threshold function seize to be a function of $\alpha$),
all the parts in Condition~\ref{condition_h} (and likewise, Condition~\ref{condition_threshold}) which involve $\alpha$
can be ignored (see remark at the end of Section~\ref{section_basic}).

\subsection{Difference}
\label{subsection_stepupdown_difference}
Following the notation of Section~\ref{section_arbitrary},
the difference $h_1 \setminus h_2$ of two procedures $h_1$ and $h_2$
can equivalently be written as $h_1 \cap h_2^c$ using the complement of $h_2$.
If $h_2$ is a step-up procedure, $h_2^c$ turns into a step-down procedure (see Section~\ref{subsection_stepupdown_complement}).

Therefore, in case both $h_1$ and $h_2$ are step-up (step-down) procedures satisfying Condition~\ref{condition_threshold},
Lemma~\ref{lemma_results_arbitrary} yields that
$h_1 \setminus h_2$ is still monotonic but not well-behaved any more.
However,
if $h_1$ is a step-down and $h_2$ is a step-up procedure (or vice versa),
the results from Section~\ref{subsection_stepupdown_intersection} apply and yield
that $h_1 \setminus h_2$ a well-behaved step-up/step-down procedure with explicit threshold function.

\subsection{Example}
\label{subsection_example}
Suppose we are interested in testing $H_{01},\ldots,H_{0m}$ for statistical significance while
ensuring FDR control at a pre-specified level $0.05$,
for instance using the \cite{Benjamini1995CFD} procedure.
Additionally, we are interested in only selecting those $k \in \N$ hypotheses having the lowest p-values (assuming there are no ties),
for instance due to the fact that budget constraints only allow follow-up studies for $k$ hypotheses.
We thus look to construct an intersection procedure which returns the indices of hypotheses satisfying both requirements simultaneously.

To this end,
let $h^1$ be the \cite{Benjamini1995CFD} step-up procedure controlling the FDR at level $0.05$,
defined through the threshold function $\tau^1(i) = 0.05\cdot i/m$ for $i \in \{1,\ldots,m\}$.
Moreover, let $h^2$ be the (step-up) \cite{Bonferroni1936} correction with
constant but $p$-dependant threshold function $\tau^2_p(i) = p_{(k)}$ for $i \in \{1,\ldots,m\}$,
where $p_{(k)}$ denotes the $k$'th smallest entry of vector $p=(p_1,\ldots,p_m)$.
By construction,
all rejected hypotheses by $h^2$ are precisely the ones with the $k$ lowest p-values.
Threshold functions $\tau_\alpha$ for which $\alpha=\alpha(p)$ is a function of $p$ are widely used in practice,
for instance when using an estimate of the proportion of true null hypotheses to correct the level $\alpha$
(see, for instance, Example $1$ in \cite{GandyHahn2016framework}).
Both the \cite{Benjamini1995CFD} procedure $h^1$ and the \cite{Bonferroni1936} correction $h^2$
satisfy Condition~\ref{condition_threshold} and are thus well-behaved.

Following Section~\ref{subsection_stepupdown_intersection},
the step-up procedure $h$ defined through the threshold function
$\tau_p(i)=\min(\tau^1(i),\tau^2_p(i))=\min(0.05\cdot i/m,p_{(k)})$
computes $h^1 \cap h^2$.
Moreover, $h$ is well-behaved by Lemma~\ref{lemma_stepup_intersection}.

Consider the numerical example of $15$ ordered p-values
(here denoted as $\tilde{p}$) given in Section~3.2 of \cite{Benjamini1995CFD}.
In agreement with \cite{Benjamini1995CFD},
who test $\tilde{p}$ while controlling the FDR at level $0.05$ and observe four rejections (of the first four hypotheses),
$h^1$ applied to $\tilde{p}$ yields $h^1(\tilde{p})=\{1,2,3,4\}$.
Applying the intersection procedure
$h$ constructed above with $k=3$ to $\tilde{p}$ yields $h(\tilde{p})=\{1,2,3\}$,
that is $h$ indeed yields those $k=3$ hypotheses having the lowest p-values which are also significant under FDR control at level $0.05$.

\section{Discussion}
\label{section_discussion}
This article investigates closure properties of general multiple testing procedures,
step-up and step-down procedures as well as subclasses of (solely) monotonic and well-behaved procedures
under four set operations (union, intersection, complement and difference).

The article shows that for general multiple testing procedures, solely the class of
monotonic procedures is closed under taking the union and intersection.
However, the subclass of well-behaved step-up (step-down) procedures
is closed under taking the union and intersection.

The implications of the closure properties proven in this article are threefold:
They provide a tool to construct new procedures of known form and with known properties,
they render theoretical results \citep{Gordon2007,GordonSalzman2008}
instantly applicable to a large class of multiple testing procedures
and they allow to combine the benefits of various multiple testing procedures in practice.

\appendix
\section{Proofs}
\label{section_proofs}
The appendix contains all proofs sorted by section.

\subsection{Proofs of Section~\ref{section_arbitrary}}
\begin{proof}[Proof of Lemma~\ref{lemma_results_arbitrary}]
We prove both assertions.
\begin{enumerate}[wide]
  \item Monotonicity.
If $p \leq q$ and $\alpha \leq \alpha'$ then
$h_1(q,\alpha) \subseteq h_1(p,\alpha')$, $h_2(q,\alpha) \subseteq h_2(p,\alpha')$ and thus
$h_1(q,\alpha) \cup h_2(q,\alpha) \subseteq
h_1(p,\alpha') \cup h_2(p,\alpha')$
as well as $h_1(q,\alpha) \cap h_2(q,\alpha) \subseteq h_1(p,\alpha') \cap h_2(p,\alpha')$.

The second statement of Condition~\ref{condition_h}.
As $h_1$ satisfies Condition~\ref{condition_h},
there exists $\delta_1$ such that
$\max( \| p-p^\ast \|, |\alpha-\alpha^\ast|) < \delta_1$
implies $h_1(p,\alpha)=h_1(p^\ast,\alpha^\ast)$.
Likewise for $h_2$ with a suitable $\delta_2$.
For $\delta=\min(\delta_1,\delta_2)$ and
$\max( \| p-p^\ast \|, |\alpha-\alpha^\ast|) < \delta$,
we have
$h_1(p,\alpha)=h_1(p^\ast,\alpha^\ast)$ and
$h_2(p,\alpha)=h_2(p^\ast,\alpha^\ast)$
and thus
$h_1 \cup h_2(p,\alpha)=h_1 \cup h_2(p^\ast,\alpha^\ast)$.
Likewise for the intersection.



  \item Fix $\alpha$.
If $q \leq p$ then $h_i(p,\alpha) \subseteq h_i(q,\alpha)$, but $h_i(p,\alpha)^c \supseteq h_i(q,\alpha)^c$ for $i \in \{1,2\}$.
The complement is thus not monotonic.
The operation $h_1(p,\alpha) \setminus h_2(p,\alpha)$ is equivalent to $h_1(p,\alpha) \cap (h_2(p,\alpha))^c$
and thus also not monotonic.
\end{enumerate}
\end{proof}

\subsection{Proofs of Section~\ref{section_stepupdown}}
\begin{proof}[Proof of Lemma~\ref{lemma_necessary}]
Let $h$ be a step-up (step-down) procedure characterised through its threshold function $\tau_\alpha$.
We now verify Condition~\ref{condition_threshold}.
\begin{enumerate}[wide]
\item We show that $\tau_\alpha(i)$ must be non-decreasing in $i$ for a fixed $\alpha$.
Indeed, suppose $\tau_\alpha$ is decreasing for some $i$.
Then $h$ cannot be monotonic for all inputs:
Assume that $m=2$, $p=(0.5,0.5)$ and $h$ is of step-up type with $\tau_\alpha(1)=1$ and $\tau_\alpha(2)=0$.
Then $h(p)=\{ 1\}$ but increasing $p$ to $q=(1,0.5)$ results in $h(q)=\{ 2\} \not\subseteq h(p)$,
thus contradicting monotonicity.
  \item We show that $\tau_\alpha(i)$ must also be non-decreasing in $\alpha$ for any fixed $i$.
Indeed, for a fixed $i$, suppose $\tau_\alpha(i)>\tau_{\alpha'}(i)$ for $\alpha<\alpha'$.
Then $h$ can again not be monotonic for all inputs:
Assume we test $m=1$ hypothesis $H_{01}$ with p-value $p=\tau_\alpha(1)>\tau_{\alpha'}(1)$.
Then $H_{01}$ is rejected at $\tau_\alpha(1)$ but non-rejected at $\tau_{\alpha'}(1)$ even though $\alpha<\alpha'$,
thus contradicting monotonicity.
  \item We show that $\tau_\alpha(i)$ is continuous in $\alpha$ for a fixed $i$.
Let $\epsilon>0$ be given.
Fix $i$ and $\alpha^\ast$.
We show continuity of the threshold function at $\alpha^\ast$ as $\alpha \rightarrow \alpha^\ast$.

Case 1: $\alpha^\ast>\alpha$.
Then $\tau_{\alpha^\ast}(i) \geq \tau_\alpha(i)$ by monotonicity.
Define $p^\ast=(0,\ldots,0,p_i^\ast, 1, \ldots, 1)$ for any $p_i^\ast \in [0,\tau_{\alpha^\ast}(i))$
(i.e., $p^\ast$ contains $p_i^\ast$ as $i$th entry, zeros before and ones after).
Since $h$ is well-behaved it satisfies the second part of Condition~\ref{condition_h},
hence for the fixed $p^\ast$ and $\alpha^\ast$ there exists $\delta>0$ such that for all $\alpha$ and $p$ satisfying
$|\alpha-\alpha^\ast|<\delta$, $\Vert p-p^\ast \Vert<\delta$
we have
$h(p,\alpha)=h(p^\ast,\alpha^\ast)$.
Assume $|\alpha-\alpha^\ast|<\delta$.
Define $p=(0,\ldots,0,p_i^\ast-\gamma, 1, \ldots, 1)$ for any $0<\gamma<\min(\delta,\epsilon)$.
Since $|\alpha-\alpha^\ast|<\delta$ and $\Vert p-p^\ast \Vert=\gamma<\delta$,
$h(p,\alpha)=h(p^\ast,\alpha^\ast)$ by Condition~\ref{condition_h}:
As the $i$th hypothesis is rejected in $h(p^\ast,\alpha^\ast)$ and hence also in $h(p,\alpha)$,
it follows that $\tau_{\alpha^\ast}(i) \geq \tau_\alpha(i) \geq p_i = p_i^\ast - \gamma$.
This holds true for all $p_i^\ast \in [0,\tau_{\alpha^\ast}(i))$,
thus $\tau_{\alpha^\ast}(i) \geq \tau_\alpha(i) \geq \tau_{\alpha^\ast}(i) - \gamma$
and hence $|\tau_{\alpha^\ast}(i) - \tau_\alpha(i)| \leq \gamma < \epsilon$.

Case 2: $\alpha^\ast \leq \alpha$.
Then $\tau_{\alpha^\ast}(i) \leq \tau_\alpha(i)$.
Using $p^\ast=(0,\ldots,0,p_i^\ast, 1, \ldots, 1)$ with $p_i^\ast \in (\tau_{\alpha^\ast}(i),1]$
and  $p=(0,\ldots,0,p_i^\ast+\gamma, 1, \ldots, 1)$ with $0 < \gamma < \min(\delta,\epsilon)$,
the same argument as in Case $1$ yields $\tau_{\alpha^\ast}(i) \leq \tau_\alpha(i) < \tau_{\alpha^\ast}(i) + \gamma$.
\end{enumerate}
\end{proof}

\begin{proof}[Proof of Lemma~\ref{lemma_stepup_union}]
Let $h = h^u \cup \tilde{h}^u$ be defined through $\tau_\alpha(i)=\max(\tau_\alpha^u(i),\tilde{\tau}_\alpha^u(i))$.
First, $h$ is monotonic by Lemma~\ref{lemma_results_arbitrary}.
We now verify Condition~\ref{condition_threshold}.

\begin{enumerate}[wide]
  \item The function $\tau_\alpha(i)$ is non-decreasing in $i$:
Suppose w.l.o.g.\ $\tau_\alpha(i)=\tau_\alpha^u(i)$.
If $\tau_\alpha^u(i+1) \geq \tilde{\tau}_\alpha^u(i+1)$ then
$\tau_\alpha(i) = \tau_\alpha^u(i) \leq \tau_\alpha^u(i+1) = \tau_\alpha(i+1)$
by definition of $\tau_\alpha$ as the maximum of $\tau_\alpha^u$ and $\tilde{\tau}_\alpha^u$.
If $\tau_\alpha^u(i+1) < \tilde{\tau}_\alpha^u(i+1)$ then
$\tau_\alpha(i) = \tau_\alpha^u(i) \leq \tau_\alpha^u(i+1) < \tilde{\tau}_\alpha^u(i+1) = \tau_\alpha(i+1)$.

  \item $\tau_\alpha$ is continuous in $\alpha$ as the maximum of two continuous functions
(in this case in $\alpha$) is continuous.
The function $\tau_\alpha$ is also non-decreasing in $\alpha$:
Indeed, fix $i$, let $\alpha \leq \alpha'$
and suppose w.l.o.g.\ $\tau_\alpha(i)=\tau_\alpha^u(i)$.
If $\tau_{\alpha'}^u(i) \leq \tilde{\tau}_{\alpha'}^u(i)$ then
$\tau_\alpha(i)=\tau_\alpha^u(i) \leq \tau_{\alpha'}^u(i) \leq \tilde{\tau}_{\alpha'}^u(i)=\tau_{\alpha'}(i)$
by definition of $\tau_\alpha$ as the maximum of $\tau_\alpha^u$ and $\tilde{\tau}_\alpha^u$.
Otherwise,
$\tau_\alpha(i)=\tau_\alpha^u(i) \leq \tau_{\alpha'}^u(i)=\tau_{\alpha'}(i)$.
\end{enumerate}
\end{proof}

\begin{proof}[Proof of Lemma~\ref{lemma_stepup_intersection}]
Let $h = h^u \cap \tilde{h}^u$ be defined through $\tau_\alpha(i)=\min(\tau_\alpha^u(i),\tilde{\tau}_\alpha^u(i))$.
Again, $h$ is monotonic by Lemma~\ref{lemma_results_arbitrary}.
We now verify Condition~\ref{condition_threshold}.

\begin{enumerate}[wide]
  \item The function $\tau_\alpha(i)$ is non-decreasing in $i$: Suppose w.l.o.g.\ $\tau_\alpha(i)=\tau_\alpha^u(i)$.
If $\tau_\alpha^u(i+1) \geq \tilde{\tau}_\alpha^u(i+1)$ then
$\tau_\alpha(i) = \tau_\alpha^u(i) \leq \tilde{\tau}_\alpha^u(i) \leq \tilde{\tau}_\alpha^u(i+1) = \tau_\alpha(i+1)$
by definition of $\tau_\alpha$ as the minimum of $\tau_\alpha^u$ and $\tilde{\tau}_\alpha^u$.
If $\tau_\alpha^u(i+1) < \tilde{\tau}_\alpha^u(i+1)$ then
$\tau_\alpha(i) = \tau_\alpha^u(i) \leq \tau_\alpha^u(i+1) = \tau_\alpha(i+1)$.

  \item $\tau_\alpha$ is continuous in $\alpha$ as the minimum of two continuous functions
(in this case in $\alpha$) is continuous.
The function $\tau_\alpha$ is also non-decreasing in $\alpha$:
Indeed, fix $i$, let $\alpha \leq \alpha'$
and suppose w.l.o.g.\ $\tau_\alpha(i)=\tau_\alpha^u(i)$.
If $\tau_{\alpha'}^u(i) \leq \tilde{\tau}_{\alpha'}^u(i)$ then
$\tau_\alpha(i)=\tau_\alpha^u(i) \leq \tau_{\alpha'}^u(i)=\tau_{\alpha'}(i)$.
Otherwise,
$\tau_\alpha(i)=\tau_\alpha^u(i) \leq \tilde{\tau}_\alpha^u(i) \leq \tilde{\tau}_{\alpha'}^u(i)=\tau_{\alpha'}(i)$
(by definition of $\tau_\alpha$ as the minimum).
\end{enumerate}
\end{proof}

\begin{proof}[Proof of Lemma~\ref{lemma_stepup_complement}]
Since $\tau_\alpha^u(i)$ is non-decreasing in $i$,
it is immediate to verify that $\tau_\alpha^d(i)$ is also non-decreasing in $i$.
For a given constant $\alpha$, the second part of Condition~\ref{condition_threshold} can be ignored
as shown in \citep[Condition 1]{GandyHahn2014} and is hence automatically satisfied (see Section~\ref{section_basic}).
\end{proof}


\begin{thebibliography}{}

\bibitem[Benjamini and Hochberg, 1995]{Benjamini1995CFD}
Benjamini, Y. and Hochberg, Y. (1995).
\newblock Controlling the false discovery rate: A practical and powerful
  approach to multiple testing.
\newblock {\em J Roy Stat Soc B Met}, 57(1):289--300.

\bibitem[Besag and Clifford, 1991]{BesagClifford1991}
Besag, J. and Clifford, P. (1991).
\newblock Sequential {M}onte {C}arlo p-values.
\newblock {\em Biometrika}, 78(2):301--304.

\bibitem[Bonferroni, 1936]{Bonferroni1936}
Bonferroni, C. (1936).
\newblock Teoria statistica delle classi e calcolo delle probabilit\`a.
\newblock {\em Pubblicazioni del R Istituto Superiore di Scienze Economiche e
  Commerciali di Firenze}, 8:3--62.

\bibitem[Dmitrienko and Tamhane, 2013]{DmitrienkoTamhane2013}
Dmitrienko, A. and Tamhane, A. (2013).
\newblock General theory of mixture procedures for gatekeeping.
\newblock {\em Biom J}, 55(3):402--419.

\bibitem[Finner and Strassburger, 2002]{FinnerStrassburger2002}
Finner, H. and Strassburger, K. (2002).
\newblock The partitioning principle: a powerful tool in multiple decision
  theory.
\newblock {\em Ann Stat}, 30(4):1194--1213.

\bibitem[Gabriel, 1969]{Gabriel1969}
Gabriel, K. (1969).
\newblock {Simultaneous Test Procedures -- Some Theory of Multiple
  Comparisons}.
\newblock {\em Ann Math Statist}, 40(1):224--250.

\bibitem[Gandy and Hahn, 2014]{GandyHahn2014}
Gandy, A. and Hahn, G. (2014).
\newblock {MMCTest -- A Safe Algorithm for Implementing Multiple Monte Carlo
  Tests}.
\newblock {\em Scand J Stat}, 41(4):1083--1101.

\bibitem[Gandy and Hahn, 2016]{GandyHahn2016framework}
Gandy, A. and Hahn, G. (2016).
\newblock A framework for {M}onte {C}arlo based {M}ultiple {T}esting.
\newblock {\em Scand J Stat}, 43(4):1046--1063.

\bibitem[Gordon, 2007]{Gordon2007}
Gordon, A. (2007).
\newblock Unimprovability of the {B}onferroni procedure in the class of general
  step-up multiple testing procedures.
\newblock {\em Stat Probab Lett}, 77(2):117--122.

\bibitem[Gordon and Salzman, 2008]{GordonSalzman2008}
Gordon, A. and Salzman, P. (2008).
\newblock Optimality of the {H}olm procedure among general step-down multiple
  testing procedures.
\newblock {\em Stat Probab Lett}, 78(13):1878--1884.

\bibitem[Guo and Peddada, 2008]{GuoPedadda2008}
Guo, W. and Peddada, S. (2008).
\newblock Adaptive choice of the number of bootstrap samples in large scale
  multiple testing.
\newblock {\em Stat Appl Genet Mol Biol}, 7(1):1--16.

\bibitem[Hochberg, 1988]{Hochberg1988}
Hochberg, Y. (1988).
\newblock A sharper {B}onferroni procedure for multiple tests of significance.
\newblock {\em Biometrika}, 75(4):800--802.

\bibitem[Hochberg and Tamhane, 2008]{HochbergTamhane2008}
Hochberg, Y. and Tamhane, A. (2008).
\newblock {\em Multiple Comparison Procedures}.
\newblock Wiley.

\bibitem[Holm, 1979]{Holm1979}
Holm, S. (1979).
\newblock A simple sequentially rejective multiple test procedure.
\newblock {\em Scand J Stat}, 6(2):65--70.

\bibitem[Hommel and Bretz, 2008]{HommelBretz2008}
Hommel, G. and Bretz, F. (2008).
\newblock Aesthetics and power considerations in multiple testing -- a
  contradiction?
\newblock {\em Biom J}, 50(5):657--666.

\bibitem[Hommel et~al., 2007]{Hommel2007}
Hommel, G., Bretz, F., and Maurer, W. (2007).
\newblock Powerful short-cuts for multiple testing procedures with special
  reference to gatekeeping strategies.
\newblock {\em Stat Med}, 26(22):4063--4073.

\bibitem[Hsu, 1996]{Hsu1996}
Hsu, J. (1996).
\newblock {\em Multiple Comparisons: Theory and Methods}.
\newblock Chapman and Hall/CRC.

\bibitem[Lin, 2005]{Lin2005}
Lin, D. (2005).
\newblock An efficient {M}onte {C}arlo approach to assessing statistical
  significance in genomic studies.
\newblock {\em Bioinformatics}, 21(6):781--787.

\bibitem[Marcus et~al., 1976]{Marcus1976}
Marcus, R., Peritz, E., and Gabriel, K. (1976).
\newblock On closed testing procedures with special reference to ordered
  analysis of variance.
\newblock {\em Biometrika}, 63(3):655--660.

\bibitem[Romano and Shaikh, 2006]{RomanoShaikh2006}
Romano, J. and Shaikh, A. (2006).
\newblock Stepup procedures for control of generalizations of the familywise
  error rate.
\newblock {\em Ann Stat}, 34(4):1850--1873.

\bibitem[Romano and Wolf, 2005]{RomanoWolf2005}
Romano, J. and Wolf, M. (2005).
\newblock {Exact and Approximate Stepdown Methods for Multiple Hypothesis
  Testing}.
\newblock {\em J Am Stat Assoc}, 100(469):94--108.

\bibitem[Roth, 1999]{Roth1999}
Roth, A. (1999).
\newblock {Multiple comparison procedures for discrete test statistics}.
\newblock {\em J Stat Plan Infer}, 82(1-2):101--117.

\bibitem[Sandve et~al., 2011]{Sandve2011}
Sandve, G., Ferkingstad, E., and Nygard, S. (2011).
\newblock Sequential {M}onte {C}arlo multiple testing.
\newblock {\em Bioinformatics}, 27(23):3235--3241.

\bibitem[Sidak, 1967]{Sidak1967}
Sidak, Z. (1967).
\newblock Rectangular confidence regions for the means of multivariate normal
  distributions.
\newblock {\em J Am Stat Assoc}, 62(318):626--633.

\bibitem[Tamhane and Liu, 2008]{TamhaneLiu2008}
Tamhane, A. and Liu, L. (2008).
\newblock On weighted {H}ochberg procedures.
\newblock {\em Biometrika}, 95(2):279--294.

\bibitem[van Wieringen et~al., 2008]{Wieringen2008}
van Wieringen, W., van~de Wiel, M., and van~der Vaart, A. (2008).
\newblock A test for partial differential expression.
\newblock {\em J Am Stat Assoc}, 103(483):1039--1049.

\bibitem[Westfall and Young, 1993]{WestfallYoung1993}
Westfall, P. and Young, S. (1993).
\newblock {\em Resampling-based multiple testing: Examples and methods for
  p-value adjustment}.
\newblock Wiley.

\end{thebibliography}
\end{document}